\documentclass[12pt]{amsart}
\usepackage[T1]{fontenc}
\usepackage[utf8]{inputenc}
\usepackage{amsmath,amssymb,amsthm,mathtools,bm}
\usepackage{geometry}
\usepackage[hypertexnames=false]{hyperref}
\hypersetup{colorlinks=true, linkcolor=blue, citecolor=blue, urlcolor=blue}
\usepackage{enumitem}

\newtheorem{theorem}{Theorem}[section]
\newtheorem{proposition}[theorem]{Proposition}

\theoremstyle{definition}
\newtheorem{definition}[theorem]{Definition}

\theoremstyle{remark}
\newtheorem{remark}[theorem]{Remark}

\newcommand{\oconn}{\mathring{\Gamma}}

\newcommand{\tLambda}{\widetilde{\Lambda}}

\begin{document}

\title{The Geometry of Dilation- and Shear-Deformed Spaces}
\author{Gordon Liu}
\address{Independent Scholar, Melbourne, Australia}
\email{gordonliu168@gmail.com}

\begin{abstract}
This paper develops a deformation-field geometry for spaces whose local frames may undergo internal dilation, compression, and shear.  The basic datum is an admissible dilation--shear field \(P\) over a selected metric-compatible reference geometry \((M,\bar g,\bar\nabla)\).  It represents the induced metric by \(g=P^T\bar gP\) and compares tangent data through the reference representative \(\bar V=PV\).  The covariant derivative associated with the natural connection is defined by
\[
 \nabla_XV=P^{-1}\bar\nabla_X(PV),
\]
with local connection coefficients
\[
 \Gamma=\Lambda=P^{-1}\bar\Gamma P+P^{-1}dP .
\]
Thus the total dilation--shear compensation is represented by the natural connection coefficients \(\Gamma=\Lambda\).  If \(\bar\nabla\bar g=0\), then \(\nabla g=0\); hence the covariant derivative associated with the natural connection has zero nonmetricity.  The general distinction from Levi-Civita geometry lies instead in torsion and in the deformation origin of the comparison.  The Levi-Civita connection coefficients \(\mathring\Gamma[g]\) are retained as the torsion-free metric connection of the induced metric layer; they appear naturally in fully isometric realizations, but they are not an additional term to be added to the natural connection.  The same pullback rule covers composite references that already contain both an isometric realization and an internal dilation--shear natural connection.  Examples involving one-dimensional dilation, conformal deformation, anisotropic dilation, shear, and spherical geometries distinguish metric representation, natural dilation--shear comparison, Levi-Civita comparison, and embedded realization.
\end{abstract}

\subjclass[2020]{53B20, 53C05, 53C15, 53A45, 53C50}
\keywords{Riemannian geometry, deformation field, dilation--shear, reference geometry, metric compatibility, torsion, comparison connection}

\maketitle

\section{Introduction}

Differential geometry provides a powerful language for studying smooth metric manifolds, their Levi-Civita connections, and their curvature.  In the standard Riemannian viewpoint, one starts with a metric space \((M,g)\), and the Levi-Civita connection is then singled out by metric compatibility and torsion-freeness \cite[pp.~124--126]{Lee}\cite[pp.~54--55]{doCarmo}.  This gives a canonical intrinsic comparison law: parallel transport preserves lengths and inner products with respect to the given metric.

The point of departure of the present paper is that a geometric body may carry another kind of structure in addition to smooth bending or curvature.  Besides being curved as a metric object, a space may have internal dilation, compression, or shear with respect to a chosen reference geometry.  This is the geometric situation that motivated the present theory: most standard geometric descriptions emphasize smooth curved shapes, while the possibility of intrinsic non-uniform dilation--shear is usually not separated as an independent comparison structure.  Conversely, a curved geometric body can often be understood as one way in which dilation--shear information is represented geometrically, for example after an isometric realization in an ambient space.  The goal here is to formulate a geometry that keeps track of both the ordinary Riemannian metric comparison and the internal dilation--shear comparison.

A simple curve already illustrates the motivation.  Classical curve geometry describes how a curve bends in a plane, how it twists out of the plane, and how its moving frame rotates in an ambient space.  One may also ask, however, whether the curve itself carries non-uniform dilation or compression along its tangent direction, or more generally whether neighboring curve elements possess an internal dilation--shear comparison.  Such information is not described by the curvature and torsion of the embedded curve alone.  The present theory is designed to retain this internal deformation structure and to extend the same idea from curves to surfaces and higher-dimensional manifolds.

The basic object is a deformation field \(P\).  It is modeled on the non-rotational, dilation--shear sector of \(GL(n,\mathbb R)\) with respect to \(\bar g\): pointwise \(\bar g\)-orthogonal rotations, and boosts in pseudo-Riemannian signature, are excluded because they preserve \(\bar g\) and therefore do not generate a new metric in \(g=P^T\bar gP\).  The field \(P\) has two related roles.  First, with respect to a fixed reference metric \(\bar g\), it deforms the metric frame and represents the metric
\begin{equation}\label{eq:intro_metric}
 g=P^{T}\bar g P .
\end{equation}
Second, \(P\) is used as a comparison map between the deformed geometry and the reference representation: a tangent vector \(V\) in the deformed space has reference representative \(\bar V=PV\), while \(P^{-1}\) returns reference representatives to the deformed space.  Thus \(P\) is not a coordinate Jacobian and is not merely a change of notation for the metric; it both shapes the metric frame and provides the push-back/push-forward comparison used in the first-order derivation of the dilation--shear compensation.

The Levi-Civita connection of the represented metric is denoted by \(\mathring{\Gamma}[g]\).  It is the canonical metric-preserving and torsion-free comparison associated with the metric layer \((M,g)\).  In the present deformation viewpoint, however, this connection is derived from the metric; it is not the natural comparison rule of the dilation--shear deformation field itself.  This distinction is especially clear in a fully isometric realization: the Levi-Civita connection is the tangential metric-preserving connection of the realized Riemannian geometry, while the dilation--shear data are determined by the deformation field and the selected reference comparison.

The dilation--shear comparison is obtained by comparing reference representatives.  For a deformed-space vector \(V\), the reference representative is \(\bar V=PV\).  The natural-parallel condition is that \(PV\) be parallel in the selected reference geometry.  This gives
\begin{equation}\label{eq:intro_natural_connection}
 \nabla_XV:=P^{-1}\bar\nabla_X(PV).
\end{equation}
If the reference covariant derivative \(\bar\nabla\) has local connection coefficients \(\bar\Gamma\), then the covariant derivative \(\nabla\) has local connection coefficients
\begin{equation}\label{eq:intro_connection_coefficients}
 \Gamma=P^{-1}\bar\Gamma P+P^{-1}dP.
\end{equation}
The total dilation--shear compensation is denoted by \(\Lambda\), and in this natural comparison sense
\begin{equation}\label{eq:intro_Lambda_equals_Gamma}
 \Lambda=\Gamma.
\end{equation}
Only in a fixed flat Euclidean or Minkowskian Cartesian baseline, where \(\bar\Gamma=0\), does this reduce to
\[
 \Gamma=\Lambda=P^{-1}dP .
\]
For an intrinsic final space one uses the total deformation field and the selected fixed baseline to compute the corresponding total coefficients.  More generally, the selected reference may already combine an isometric realization with an internal dilation--shear natural connection.  Such a composite reference is still represented only by its current metric-compatible comparison \((\bar g,\bar\nabla,\bar\Gamma)\).  A further deformation is then governed by the same formula \eqref{eq:intro_connection_coefficients}, rather than by a separate embedding correction or a Levi-Civita summand.

A central consequence is metric compatibility.  Whenever the reference comparison is metric-compatible, \(\bar\nabla\bar g=0\), the covariant derivative \(\nabla\) defined by \eqref{eq:intro_natural_connection} is metric-compatible with the induced metric \(g=P^T\bar gP\).  Thus the deformation spaces considered here have no nonmetricity with respect to the covariant derivative associated with the natural total connection.  Their difference from the Levi-Civita metric layer is generally expressed by torsion and by the deformation origin of \(\Gamma\), not by nonmetricity.

Consequently, the expression \(\mathring\Gamma+\Lambda\) is not used as the natural total connection of the deformation geometry.  The natural total connection is represented by \(\Gamma=\Lambda\).  If one later compares these coefficients with the Levi-Civita connection coefficients of the same induced metric, the difference
\[
 D:=\Gamma-\mathring\Gamma[g]
\]
is only a relative distortion tensor between two already specified connections.  It should not be confused with the dilation--shear total connection \(\Lambda\), and it should not be used to redefine the natural comparison.

If no deformation-comparison data are retained, the framework reduces to the ordinary Riemannian metric-only case
\begin{equation}\label{eq:intro_riemannian_limit}
 \nabla=\mathring\nabla[g], \qquad \Gamma=\mathring\Gamma[g].
\end{equation}
Thus Riemannian geometry keeps only the metric, its curvature, and the Levi-Civita comparison, while the present framework also retains the dilation--shear comparison whenever such deformation-comparison data are part of the geometry.

Embedded realizations are used as examples and comparisons: they show how an intrinsic metric geometry may be represented by ambient bending, without defining the dilation--shear compensation itself.

The paper is organized as follows.  Section~\ref{sec:metric} introduces deformation fields and the induced metric.  Section~\ref{sec:connections} constructs the natural connection, proves the uniform rule for composed and composite references, and clarifies its relation to the Levi-Civita connection of the induced metric layer.  Section~\ref{sec:torsion_curvature} records torsion, nonmetricity, curvature, the flat-baseline Maurer--Cartan identity, and the relation with Levi-Civita curvature.  Section~\ref{sec:examples} gives examples.  Section~\ref{sec:relations} briefly locates the framework with respect to Riemannian geometry, embedding geometry, metric-affine geometry, symmetric teleparallel geometry, and conformal geometry.

\section{Deformation Fields and Deformed Metrics}\label{sec:metric}

\subsection{Reference geometries and deformation fields}

Let $M$ be a smooth $n$-dimensional manifold.  A deformation field can be introduced with respect to any selected reference metric geometry
\begin{equation}
 (M,\bar g),
\end{equation}
which may be flat or curved, intrinsic, or already realized as an embedded Riemannian geometry.  Thus the deformation field is not tied to a Euclidean or Minkowskian reference for the purpose of metric representation.  Once a reference metric \(\bar g\) and an admissible deformation field \(P\) are selected, the represented metric of the new space is
\begin{equation}
 g=P^T\bar gP .
\end{equation}
In this sense the theory studies dilation--shear deformations of arbitrary reference spaces.

\subsection{Reference and deformed coordinates}\label{subsec:reference_deformed_coordinates}

We distinguish the coordinate notation of the reference representation from that of the deformed space. Reference coordinates, when used, are denoted by
\[
 \bar x^A,\qquad \bar\partial_A=\frac{\partial}{\partial \bar x^A},\qquad d\bar x^A,
\]
where capital Latin indices \(A,B,\ldots\) refer to fixed-reference components. Coordinates on the deformed space are denoted by
\[
 x^\mu,\qquad \partial_\mu=\frac{\partial}{\partial x^\mu},\qquad dx^\mu,
\]
where Greek indices \(\mu,\nu,\ldots\) refer to deformed-space components.

The same field \(P\) enters in two distinct ways. In the metric construction
\begin{equation}\label{eq:coordinate_distinction_metric}
 ds^2=g_{\mu\nu}(x)dx^\mu dx^\nu
 =\bar g_{AB}\bar\theta^A\bar\theta^B,
 \qquad
 g_{\mu\nu}=\bar g_{AB}P^A{}_{\mu}P^B{}_{\nu},
\end{equation}
it acts as the dilation--shear deformation field, with
\begin{equation}\label{eq:reference_coframe_representative}
 \bar\theta^A:=P^A{}_{\mu}dx^\mu .
\end{equation}
In the comparison of tangent data, it gives the fixed-reference representative of a deformed-space vector:
\begin{equation}\label{eq:coordinate_distinction_barV}
 \bar V^A=P^A{}_{\mu}V^\mu .
\end{equation}
The coframe \(\bar\theta^A\) is therefore a fixed-reference representative of the deformed displacement, not necessarily an exact differential \(d\bar x^A\). Only in the special integrable case in which \(P\) is the Jacobian of a coordinate map can one write \(\bar\theta^A=d\bar x^A\). Thus barred indices and Greek indices should not be identified unless an explicit frame identification has been chosen, and \(P\) is not, in general, a coordinate Jacobian.

\subsection{Reference representation principle}\label{subsec:reference_representation_principle}

The metric law used in this paper is a pointwise reference-representation principle.  The deformation field maps a tangent vector in the deformed description to its fixed-reference representative,
\begin{equation}\label{eq:reference_representation_principle_barV}
 \bar V=P V .
\end{equation}
The representative describes the same geometric vector in the selected reference metric, and hence inner products are represented by
\begin{equation}\label{eq:reference_representation_principle_inner_product}
 g(V,W)=\bar g(PV,PW).
\end{equation}
Equivalently,
\begin{equation}\label{eq:reference_representation_principle_metric}
 g=P^T\bar g P.
\end{equation}
Thus the same field \(P\) supplies both the metric representation and the map used to compare tangent data with the reference.  This principle is pointwise and frame-theoretic; it is not, in general, a coordinate transformation.

\begin{definition}[Reference representation principle]\label{def:reference_representation_principle}
A deformation field \(P\) represents tangent vectors of the deformed geometry in the fixed reference geometry by \(\bar V=PV\).  The induced metric is the unique metric for which the reference representative preserves the numerical inner product, namely \(g(V,W)=\bar g(PV,PW)\), or \(g=P^T\bar gP\).
\end{definition}

\subsection{Admissible deformation fields}\label{subsec:deformation_field}

The fixed-reference representation of tangent data has been fixed in \eqref{eq:coordinate_distinction_barV}.  We now specify the admissible fields that generate metric deformations.  The deformation field \(P\) is modeled on the non-rotational, dilation--shear sector of \(GL(n,\mathbb R)\) with respect to \(\bar g\).  Pointwise \(\bar g\)-orthogonal rotations, and boosts in pseudo-Riemannian signature, are not included in \(P\), since they preserve \(\bar g\) and therefore do not generate a new metric through \(g=P^T\bar gP\).

In the positive-definite case, \(P\) is taken to be a pure dilation--shear deformation field when
\begin{equation}\label{eq:P_self_adjoint}
 P^{T}\bar g=\bar g P,
 \qquad
 \bar g(PX,Y)=\bar g(X,PY),
\end{equation}
and \(P\) is positive-definite.  This restriction removes the reference-orthogonal sector from the metric-generating data.  A reference-orthogonal field \(O\) satisfies
\begin{equation}
 O^T\bar g O=\bar g,
\end{equation}
and therefore rotates or boosts reference frames without changing the metric.  Such transformations may occur as frame changes, but they are not counted as dilation--shear deformations.

\begin{proposition}[Polar-decomposition motivation]
Let $(E,\bar g)$ be a finite-dimensional positive-definite inner product space.  Let \(A\colon E\to E\) be invertible.  Then
\[
 A=OP,
\]
where $O$ is $\bar g$-orthogonal and $P$ is positive-definite and $\bar g$-self-adjoint.  Moreover,
\[
 A^T\bar g A=P^T\bar g P.
\]
\end{proposition}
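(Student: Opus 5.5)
The plan is to build \(P\) as the \(\bar g\)-self-adjoint square root of the \(\bar g\)-adjoint product \(A^\ast A\), and then read off \(O\) from \(A=OP\). Throughout, I use matrix notation relative to a basis of \(E\), so \(\bar g\) is a symmetric positive-definite matrix. The \(\bar g\)-adjoint of an operator \(B\) is \(B^\ast:=\bar g^{-1}B^T\bar g\), characterised by \(\bar g(BX,Y)=\bar g(X,B^\ast Y)\). In this language, \(P\) being \(\bar g\)-self-adjoint means \(P^\ast=P\), which is exactly \(P^T\bar g=\bar gP\) as in \eqref{eq:P_self_adjoint}. Similarly, \(O\) being \(\bar g\)-orthogonal means \(O^T\bar gO=\bar g\), equivalently \(O^\ast O=I\).

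\emph{Step 1.} Set \(S:=A^\ast A=\bar g^{-1}A^T\bar gA\). This operator is \(\bar g\)-self-adjoint, since \(\bar g(SX,Y)=\bar g(AX,AY)=\bar g(X,SY)\). It is positive-definite, since \(\bar g(SX,X)=\bar g(AX,AX)>0\) for \(X\neq 0\), using the invertibility of \(A\).

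\emph{Step 2.} By the spectral theorem for self-adjoint operators on the inner product space \((E,\bar g)\), there is a \(\bar g\)-orthonormal eigenbasis \(e_i\) of \(S\) with eigenvalues \(\lambda_i>0\). Define \(P\) by \(Pe_i=\sqrt{\lambda_i}\,e_i\). Then \(P\) is \(\bar g\)-self-adjoint, positive-definite and invertible, and \(P^2=S\).

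\emph{Step 3.} Set \(O:=AP^{-1}\), so that \(A=OP\) holds by construction. Using \(P^\ast=P\), we get \((P^{-1})^\ast=P^{-1}\). Hence
\[
O^\ast O=P^{-1}A^\ast AP^{-1}=P^{-1}P^2P^{-1}=I,
\]
and therefore \(O^T\bar gO=\bar g\), i.e. \(O\) is \(\bar g\)-orthogonal.

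\emph{Step 4.} The metric identity follows directly:
\[
A^T\bar gA=P^TO^T\bar gOP=P^T\bar gP.
\]
Alternatively, one can write \(A^T\bar gA=\bar gS=\bar gP^2\) and apply the self-adjointness relation \(\bar gP=P^T\bar g\) once to obtain \(P^T\bar gP\).

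The main point needing care is Step 2: the spectral theorem must be applied with respect to \(\bar g\) rather than the standard dot product. If one prefers to avoid this, the cleanest route is to choose a \(\bar g\)-orthonormal basis, in which \(\bar g=I\) and \(\bar g\)-self-adjointness becomes ordinary symmetry. The argument then reduces to the classical matrix polar decomposition. Uniqueness of \(P\), namely that it is the unique positive square root, is not required by the statement, so I would not prove it.
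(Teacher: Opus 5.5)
Your proposal is correct and follows essentially the same route as the paper's proof: you take \(P=(A^\ast A)^{1/2}\), set \(O=AP^{-1}\), check \(O^\ast O=I\), and conclude \(A^T\bar gA=P^TO^T\bar gOP=P^T\bar gP\). You also justify explicitly two steps the paper only asserts, namely that \(A^\ast A\) is positive-definite and \(\bar g\)-self-adjoint, and that the spectral theorem applied with respect to \(\bar g\) gives the square root.
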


\begin{proof}
Let $A^*$ denote the adjoint of $A$ with respect to $\bar g$.  The operator $A^*A$ is positive-definite and $\bar g$-self-adjoint, so it has a unique positive square root $P=(A^*A)^{1/2}$.  Put $O=AP^{-1}$.  Then
\[
 O^*O=(P^{-1})^*A^*AP^{-1}=P^{-1}P^2P^{-1}=I,
\]
so $O$ is $\bar g$-orthogonal.  Since $O^T\bar g O=\bar g$, one has
\[
 A^T\bar g A=P^TO^T\bar g OP=P^T\bar g P.
\]
Thus the metric-changing part of $A$ is precisely the positive self-adjoint factor $P$.
\end{proof}

\begin{remark}[Indefinite signature]
For pseudo-Riemannian metrics, polar decompositions require additional spectral and causal restrictions.  In indefinite signature the restricted dilation--shear sector should therefore be imposed as part of the geometric data rather than inferred from an arbitrary element of $GL(n,\mathbb R)$.
\end{remark}

\subsection{Metric induced by a deformation field}\label{subsec:deformed_metric}

Given a selected reference metric \(\bar g\) and an admissible deformation field \(P\), the induced metric is
\begin{equation}\label{eq:metric_components}
 g_{\mu\nu}=\bar g_{AB}P^{A}{}_{\mu}P^{B}{}_{\nu},
\end{equation}
or equivalently
\begin{equation}\label{eq:metric_law}
 g=P^{T}\bar g P .
\end{equation}
This subsection records the metric consequence of the reference representation principle; the comparison law associated with \(P\) is developed separately in Section~\ref{sec:connections}.

\begin{proposition}[Local existence and uniqueness of the deformation field]
Let $(M,\bar g)$ be a Riemannian manifold, and let $g$ be another Riemannian metric on the same manifold. Then, locally on $M$, there exists a unique $\bar g$-symmetric positive-definite $(1,1)$-tensor field $P$ such that
\[
 g=P^T\bar g\,P .
\]
\end{proposition}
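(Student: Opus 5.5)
Since \(\bar g\) is nondegenerate, the plan is to reduce the metric equation to an equation for \((1,1)\)-tensors and then take a positive square root pointwise. Define the \((1,1)\)-tensor \(S:=\bar g^{-1}g\), i.e.\ \(\bar g(SX,Y)=g(X,Y)\). Because \(g\) is symmetric, \(S\) is \(\bar g\)-self-adjoint. Because \(g\) is positive-definite, \(\bar g(SX,X)=g(X,X)>0\) for \(X\neq 0\), so \(S\) is positive-definite with respect to \(\bar g\).

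For a \(\bar g\)-self-adjoint \(P\), condition \eqref{eq:P_self_adjoint} gives \(P^T\bar g=\bar gP\). Hence
\[
 P^T\bar gP=\bar gP^2 ,
\]
and the equation \(g=P^T\bar gP\) becomes equivalent to \(P^2=S\). At each point \(p\), \(S_p\) is a positive self-adjoint operator on the inner product space \((T_pM,\bar g_p)\), the same setting as the proof of the polar-decomposition Proposition. The spectral theorem therefore gives a unique positive self-adjoint square root \(P_p=S_p^{1/2}\). This yields existence and uniqueness pointwise, and hence globally as a rough tensor field; the word ``locally'' in the statement only matters for the smoothness claim.

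Uniqueness deserves a short argument rather than just a citation. Suppose \(Q\) is positive and self-adjoint with \(Q^2=S\). Then \(Q\) commutes with \(S\), so \(Q\) preserves each eigenspace of \(S\). On the eigenspace for eigenvalue \(\lambda\), the restriction of \(Q\) is positive self-adjoint with \(Q^2=\lambda I\). Its eigenvalues are therefore all \(\sqrt\lambda\), so \(Q=\sqrt\lambda\, I\) there, and \(Q=P\).

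The main obstacle is showing that \(p\mapsto P_p\) is smooth, since eigenvalues and eigenvectors of \(S\) can fail to be smooth where eigenvalues cross. I would avoid eigen-decompositions and use a holomorphic functional calculus instead. Near any \(p_0\), choose a \(\bar g\)-orthonormal local frame, which exists by Gram--Schmidt applied to a coordinate frame, so that \(S\) is represented by a smooth family of symmetric positive-definite matrices. By continuity, the spectrum stays in a compact interval \([a,b]\subset(0,\infty)\) on a neighborhood of \(p_0\). Fix a contour \(\gamma\) in the right half-plane enclosing \([a,b]\) and write
\[
 P=\frac{1}{2\pi i}\oint_\gamma \sqrt z\,(zI-S)^{-1}\,dz,
\]
using the principal branch of \(\sqrt z\). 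The integrand depends smoothly on \(p\), so \(P\) is smooth. As a fallback, one can note that the map \(X\mapsto X^2\) on positive symmetric matrices has derivative \(H\mapsto XH+HX\). This Sylvester operator is invertible when \(X\) is positive, because its eigenvalues are the sums \(\lambda_i+\lambda_j>0\). The inverse function theorem then shows the square root is smooth.

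Finally, the frame-defined \(P\) coincides with the pointwise \(S^{1/2}\), which is frame-independent, so \(P\) is a well-defined smooth \((1,1)\)-tensor field. It satisfies \(P^T\bar gP=\bar gP^2=\bar gS=g\), which is the required equation.
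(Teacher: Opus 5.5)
Your proposal is correct and follows essentially the same route as the paper: define the $\bar g$-self-adjoint positive operator $S=\bar g^{-1}g$ (the paper's $A_x$), take its unique positive square root pointwise so that $P^T\bar gP=\bar gP^2=g$, and deduce uniqueness from $P^2=Q^2$. The only difference is that you justify two steps the paper merely asserts: uniqueness of the positive square root (your eigenspace argument) and smooth dependence of the square root (your contour integral or the inverse function theorem for $X\mapsto X^2$).
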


\begin{proof}
Since both $\bar g$ and $g$ are positive-definite, at each point $x\in M$ there exists a unique $\bar g$-self-adjoint positive-definite endomorphism $A_x:T_xM\to T_xM$ such that
\[
 g_x(u,v)=\bar g_x(A_xu,v)
 \qquad\text{for all }u,v\in T_xM .
\]
Because $A_x$ is positive-definite and $\bar g$-self-adjoint, it admits a unique positive-definite $\bar g$-self-adjoint square root $P_x=A_x^{1/2}$.  Then
\[
 g_x(u,v)=\bar g_x(P_x^2u,v)=\bar g_x(P_xu,P_xv),
\]
which is precisely $g_x=P_x^T\bar g_xP_x$.  Smooth dependence of the positive-definite square root on the coefficients of a smooth positive-definite endomorphism gives a locally smooth tensor field $P$.

For uniqueness, suppose $P$ and $Q$ are both $\bar g$-symmetric positive-definite and satisfy $P^T\bar gP=Q^T\bar gQ$.  Then $P^2=Q^2$ as $\bar g$-self-adjoint positive-definite endomorphisms.  By uniqueness of the positive-definite square root, $P=Q$.
\end{proof}

\section{The Natural Connection}\label{sec:connections}

After the deformation field and induced metric have been defined, the next object is the comparison law.  The natural comparison of the dilation--shear geometry is obtained by comparing reference representatives through \(P\).  The induced metric also has a Levi-Civita connection, but that connection belongs to the metric layer.  It is the torsion-free metric connection of \((M,g)\), not an additional term to be added to the natural connection.

\begin{remark}[Connection symbols]\label{rem:connection_symbols}
The barred derivative \(\bar\nabla\) and barred connection coefficients \(\bar\Gamma\) refer to the selected reference comparison.  The unbarred symbol \(\nabla\) denotes the covariant derivative obtained on the deformed space by pulling that comparison back through \(P\).  Its local connection coefficients are denoted by \(\Gamma\).  The symbol \(\Lambda\) denotes the total natural dilation--shear compensation; in the natural comparison convention used here,
\[
 \Lambda=\Gamma .
\]
The circled connection \(\mathring\Gamma[g]\) is the Levi-Civita connection of the induced metric \(g\).  It is used when the metric layer alone is selected, or when a fully isometric realization is being described.
\end{remark}

\subsection{Pullback of the reference comparison}\label{subsec:DS_compensation}

Let \(\bar V=PV\).  The reference derivative compares \(\bar V\) by
\[
 \bar\nabla \bar V=d\bar V+\bar\Gamma\bar V .
\]
Pulling this comparison back by \(P^{-1}\) defines the deformed-space covariant derivative associated with the natural connection:
\begin{equation}\label{eq:natural_connection_main}
 \nabla V:=P^{-1}\bar\nabla(PV).
\end{equation}
In local connection-form notation,
\begin{align}\label{eq:pullback_total_connection}
 \nabla V
 &=P^{-1}\bigl(d(PV)+\bar\Gamma PV\bigr)\nonumber\\
 &=dV+\bigl(P^{-1}\bar\Gamma P+P^{-1}dP\bigr)V.
\end{align}
Consequently the local connection coefficients of \(\nabla\) are
\begin{equation}\label{eq:Gamma_general_reference}
 \Gamma=P^{-1}\bar\Gamma P+P^{-1}dP.
\end{equation}
The total natural dilation--shear compensation is represented by the natural connection coefficients:
\begin{equation}\label{eq:Lambda_equals_Gamma_general}
 \Lambda:=\Gamma.
\end{equation}
Only in a fixed flat Cartesian reference frame does \(\bar\Gamma=0\), so that
\begin{equation}\label{eq:Lambda_components_flat}
 \Gamma=\Lambda=P^{-1}dP,
 \qquad
 \Lambda^{\rho}{}_{\mu\nu}
 =(P^{-1})^{\rho}{}_{A}\,\partial_\mu P^{A}{}_{\nu}.
\end{equation}
If the selected reference already has a comparison law, including the Levi-Civita comparison of a fully isometrically realized Riemannian reference, then the transported term \(P^{-1}\bar\Gamma P\) is part of the natural connection on the deformed space.

\begin{proposition}[Pullback of reference comparison]\label{prop:pullback_reference_comparison}
Assume the reference representation principle \(\bar V=PV\).  Let the reference representatives be compared by \(\bar\nabla\).  Then the pulled-back rule
\[
 \nabla V=P^{-1}\bar\nabla(PV)
\]
is a covariant derivative on the deformed space, and its local connection coefficients are given by \eqref{eq:Gamma_general_reference}.  Equivalently, the natural parallel condition \(\bar\nabla(PV)=0\) is
\begin{equation}\label{eq:reference_zero_equiv_deformed_zero}
 dV+\Gamma V=0,
 \qquad
 \Gamma=\Lambda=P^{-1}\bar\Gamma P+P^{-1}dP.
\end{equation}
\end{proposition}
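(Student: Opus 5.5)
I will verify the axioms of a covariant derivative (affine connection) directly from the definition \(\nabla_XV=P^{-1}\bar\nabla_X(PV)\), using only that \(P\) is a smooth, pointwise invertible \((1,1)\)-tensor field and that \(\bar\nabla\) is a covariant derivative. Invertibility holds for admissible fields because \(P\) is positive-definite, or is imposed as part of the data in indefinite signature. There are three checks.

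\emph{\(C^\infty(M)\)-linearity in \(X\).} Since \(\bar\nabla_{fX+Y}\bar V=f\bar\nabla_X\bar V+\bar\nabla_Y\bar V\) and \(P^{-1}\) acts pointwise linearly, the same identity holds for \(\nabla\).

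\emph{Additivity in \(V\).} This follows because \(V\mapsto PV\) is additive and \(\bar\nabla_X\) is additive.

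\emph{Leibniz rule.} Since \(P(fV)=fPV\) pointwise, we get
\[
 \nabla_X(fV)=P^{-1}\bigl((Xf)PV+f\bar\nabla_X(PV)\bigr)=(Xf)V+f\nabla_XV .
\]
Smoothness of \(\nabla_XV\) follows from the smoothness of \(P\), \(P^{-1}\) and \(\bar\nabla\). Hence \(\nabla\) is a covariant derivative.

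For the local coefficients, I work in a local frame or chart. There I write \(\bar\nabla\bar V=d\bar V+\bar\Gamma\bar V\) with \(\bar\Gamma\) a matrix of one-forms, and insert \(\bar V=PV\). The product rule \(d(PV)=(dP)V+P\,dV\) gives
\[
 P^{-1}\bar\nabla(PV)=dV+\bigl(P^{-1}\bar\Gamma P+P^{-1}dP\bigr)V.
\]
This is exactly \eqref{eq:pullback_total_connection}. Reading off the coefficient of \(V\) yields \eqref{eq:Gamma_general_reference}; in components, \(\Gamma^\rho{}_{\mu\nu}=(P^{-1})^\rho{}_A\bigl(\bar\Gamma^A{}_{\mu B}P^B{}_\nu+\partial_\mu P^A{}_\nu\bigr)\). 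The identification \(\Lambda=\Gamma\) is the definition \eqref{eq:Lambda_equals_Gamma_general}.

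For the equivalence of parallel conditions, I use that \(P^{-1}\) is pointwise injective, so \(\bar\nabla(PV)=0\) if and only if \(P^{-1}\bar\nabla(PV)=0\). By the computation above, the latter is \(dV+\Gamma V=0\), which is \eqref{eq:reference_zero_equiv_deformed_zero}.

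The only genuinely delicate point is bookkeeping rather than analysis. The indices of \(P\) are mixed: one reference index \(A\) and one deformed index \(\mu\). The computation must therefore be done with \(\bar\Gamma\) expressed in the reference frame (capital indices), while \(dV\) and \(\Gamma\) are in deformed components. I would state explicitly that \(\bar\Gamma\) denotes the connection one-form of \(\bar\nabla\) relative to whatever frame the components \(P^A{}_\mu\) refer to. This ensures the product \(P^{-1}\bar\Gamma P\) is well-typed.

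I would also remark, as a consistency check, that the transformation law of \(\Gamma\) under a change of deformed frame \(V=SV'\) comes out as \(S^{-1}\Gamma S+S^{-1}dS\). This follows by the same product-rule computation applied to \(PS\). It confirms that \(\Gamma\) transforms as connection coefficients, not as a tensor, which is consistent with the covariant-derivative axioms verified above.
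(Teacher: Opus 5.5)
Your proposal is correct and follows the paper's proof: the same product-rule expansion \(\bar\nabla(PV)=P\,dV+(dP+\bar\Gamma P)V\) in a local frame, multiplied by \(P^{-1}\), gives both the coefficient formula and the parallel condition. You also check the covariant-derivative axioms explicitly (tensoriality in \(X\), additivity and the Leibniz rule in \(V\)) and use pointwise invertibility of \(P\) for the equivalence \(\bar\nabla(PV)=0 \iff dV+\Gamma V=0\); these steps make explicit what the paper's proof leaves implicit.
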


\begin{proof}
Using the product rule in a local frame,
\[
 \bar\nabla(PV)=d(PV)+\bar\Gamma PV
             =P\,dV+(dP+\bar\Gamma P)V.
\]
Multiplying by \(P^{-1}\) gives
\[
 P^{-1}\bar\nabla(PV)
 =dV+\left(P^{-1}\bar\Gamma P+P^{-1}dP\right)V.
\]
This proves both the connection coefficient formula and the parallel condition.
\end{proof}

\subsection{Relation with the Levi-Civita metric connection}\label{subsec:natural_total_comparison_principle}

The Levi-Civita connection \(\mathring\Gamma[g]\) is determined by the induced metric \(g=P^T\bar gP\) after one imposes
\[
 \mathring\nabla g=0,
 \qquad
 \mathring T=0.
\]
These are defining conditions of the metric layer.  They do not define the natural dilation--shear connection.  In a fully isometric realization, this distinction is geometrically explicit: \(\mathring\Gamma[g]\) is the tangential metric-preserving, torsion-free connection of the realized Riemannian geometry, while the natural connection coefficients are obtained by transporting the selected reference comparison through \(P\).

Thus the natural total connection is not \(\mathring\Gamma[g]+\Lambda\).  It is already
\[
 \Gamma=\Lambda=P^{-1}\bar\Gamma P+P^{-1}dP .
\]
If \(\Gamma\) is compared with \(\mathring\Gamma[g]\), their difference
\begin{equation}\label{eq:relative_distortion_only}
 D:=\Gamma-\mathring\Gamma[g]
\end{equation}
is a relative distortion tensor between two specified connections.  It is not the original dilation--shear compensation, and it should not be used to redefine the natural connection.

\begin{definition}[Natural connection]\label{def:total_connection_new_logic}
The covariant derivative of the dilation--shear deformation geometry is \(\nabla V=P^{-1}\bar\nabla(PV)\).  The associated natural connection is represented locally by the coefficients \(\Gamma\), where
\[
 \Gamma=P^{-1}\bar\Gamma P+P^{-1}dP,
\]
and the total natural dilation--shear compensation is \(\Lambda=\Gamma\).  The Levi-Civita connection coefficients \(\mathring\Gamma[g]\) are the connection coefficients of the metric layer, not an additional summand in \(\Gamma\).
\end{definition}

\subsection{Reference choice and composed deformation}\label{subsec:fixed_reference_composed_v2}

A deformation field may act on any selected metric-compatible reference geometry.  At the metric level,
\begin{equation}\label{eq:metric_level_arbitrary_reference_v3}
 g=P^T\bar gP.
\end{equation}
At the comparison level the same pullback rule applies:
\begin{equation}\label{eq:compensation_general_reference_form}
 \nabla_XV=P^{-1}\bar\nabla_X(PV),
\end{equation}
or locally,
\begin{equation}\label{eq:unified_natural_connection_formula}
 \Gamma=\Lambda=P^{-1}\bar\Gamma P+P^{-1}dP.
\end{equation}
Thus a flat Euclidean or Minkowskian baseline, a fully isometrically realized Riemannian or pseudo-Riemannian reference, an already deformed metric-compatible reference, and a composite reference carrying both embedded metric data and previous internal dilation--shear data are not governed by separate formulas.  They are different choices of \((\bar g,\bar\nabla,\bar\Gamma)\) in the same rule.

\begin{proposition}[Uniform pullback rule for composed deformations]\label{prop:unified_composition_rule}
Let \((M,\bar g,\bar\nabla)\) be a metric-compatible reference geometry, \(\bar\nabla\bar g=0\).  Let \(P_1\) and \(P_2\) be two successive admissible dilation--shear deformations, with
\begin{equation}\label{eq:g1_composed_unified}
 g_1=P_1^T\bar gP_1,
\end{equation}
\begin{equation}\label{eq:g2_composed_unified}
 g_2=P_2^Tg_1P_2.
\end{equation}
Let \(\Gamma_1\) be the natural connection coefficients of the first deformed space:
\begin{equation}\label{eq:Gamma1_composed_unified}
 \Gamma_1=P_1^{-1}\bar\Gamma P_1+P_1^{-1}dP_1.
\end{equation}
Then the natural connection coefficients of the second deformed space, using the first deformed space as reference, are
\begin{equation}\label{eq:Gamma2_stepwise_unified}
 \Gamma_2=P_2^{-1}\Gamma_1P_2+P_2^{-1}dP_2.
\end{equation}
Equivalently, relative to the original reference, the total deformation field is
\begin{equation}\label{eq:Ptot_composed_v2}
 P_{\rm tot}=P_1P_2,
\end{equation}
and
\begin{equation}\label{eq:Gamma_total_composed_unified}
 \Gamma_{\rm tot}=\Lambda_{\rm tot}
 =(P_1P_2)^{-1}\bar\Gamma(P_1P_2)+(P_1P_2)^{-1}d(P_1P_2).
\end{equation}
The two expressions agree:
\begin{equation}\label{eq:Gamma_total_expanded_unified}
 \Gamma_{\rm tot}=P_2^{-1}\Gamma_1P_2+P_2^{-1}dP_2.
\end{equation}
If the reference comparison is metric-compatible, then every stage remains metric-compatible with its induced metric.
\end{proposition}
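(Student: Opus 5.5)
The plan has three parts: get the stepwise formula from Proposition~\ref{prop:pullback_reference_comparison}, get the total formula by a direct gauge-transformation computation, and then prove metric compatibility by induction.

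\emph{Stepwise formula.} The first deformed space \((M,g_1,\nabla^{(1)})\) is taken as the reference geometry. Here \(\nabla^{(1)}V=P_1^{-1}\bar\nabla(P_1V)\) has coefficients \(\Gamma_1\) by Proposition~\ref{prop:pullback_reference_comparison}. Applying the same proposition with \(P_2\) in place of \(P\) and \((g_1,\nabla^{(1)},\Gamma_1)\) in place of \((\bar g,\bar\nabla,\bar\Gamma)\) gives \(\nabla^{(2)}V=P_2^{-1}\nabla^{(1)}(P_2V)\), with coefficients \eqref{eq:Gamma2_stepwise_unified}.

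\emph{Total formula.} Substituting the definition of \(\nabla^{(1)}\) gives
\[
\nabla^{(2)}V=P_2^{-1}P_1^{-1}\bar\nabla(P_1P_2V)=(P_1P_2)^{-1}\bar\nabla\bigl((P_1P_2)V\bigr).
\]
This is exactly the pullback of \(\bar\nabla\) through \(P_{\rm tot}=P_1P_2\). A second application of Proposition~\ref{prop:pullback_reference_comparison} then yields \eqref{eq:Gamma_total_composed_unified}, and the metric also composes correctly: \(g_2=P_2^TP_1^T\bar gP_1P_2=P_{\rm tot}^T\bar gP_{\rm tot}\).

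As an independent coefficient-level check of \eqref{eq:Gamma_total_expanded_unified}, I will expand \(d(P_1P_2)=dP_1\,P_2+P_1\,dP_2\). This gives
\[
(P_1P_2)^{-1}\bar\Gamma P_1P_2+(P_1P_2)^{-1}d(P_1P_2)=P_2^{-1}\bigl(P_1^{-1}\bar\Gamma P_1+P_1^{-1}dP_1\bigr)P_2+P_2^{-1}dP_2,
\]
which is \(P_2^{-1}\Gamma_1P_2+P_2^{-1}dP_2\). Throughout, matrices are treated as endomorphism-valued, with \(\Gamma\) and \(dP\) as matrix-valued one-forms, so that the ordering in the products is respected.

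\emph{Metric compatibility.} I will prove a one-step lemma and then induct over the stages. The lemma says: if \(\bar\nabla\bar g=0\) and \(\nabla V=P^{-1}\bar\nabla(PV)\), then \(\nabla(P^T\bar gP)=0\). For vector fields \(V,W\),
\[
X\bigl(g(V,W)\bigr)=X\bigl(\bar g(PV,PW)\bigr)=\bar g(\bar\nabla_X PV,PW)+\bar g(PV,\bar\nabla_X PW),
\]
where the second equality uses \(\bar\nabla\bar g=0\). Writing \(\bar\nabla_X PV=P\nabla_XV\), the right-hand side becomes \(g(\nabla_XV,W)+g(V,\nabla_XW)\), which is the statement \(\nabla g=0\). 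Applying the lemma first to \((\bar g,\bar\nabla,P_1)\) and then to \((g_1,\nabla^{(1)},P_2)\) gives compatibility at every stage.

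I expect the main point needing care to be the meaning of ``admissible'' for \(P_2\) relative to \(g_1\): \(P_2\) should be \(g_1\)-self-adjoint, while \(P_1P_2\) need not be \(\bar g\)-self-adjoint. I will note that the total formula only uses invertibility of \(P_{\rm tot}\), so the argument does not depend on whether \(P_{\rm tot}\) is admissible. All the rest is routine.
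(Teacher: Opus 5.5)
Your proposal is correct and takes essentially the same approach as the paper's proof: reapply the pullback rule with \((g_1,\nabla^{(1)},\Gamma_1)\) as the reference, expand \(d(P_1P_2)=dP_1\,P_2+P_1\,dP_2\) to match the two expressions, and obtain metric compatibility stage by stage from the argument of Theorem~\ref{thm:natural_metric_compatibility}. Three of your additions clarify points the paper's proof leaves implicit: the operator-level identity \(\nabla^{(2)}V=(P_1P_2)^{-1}\bar\nabla(P_1P_2V)\), the check \(g_2=P_{\rm tot}^T\bar gP_{\rm tot}\), and the remark that only invertibility of \(P_1P_2\) is used (it is generally not \(\bar g\)-self-adjoint).
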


\begin{proof}
The first step follows directly from the pullback rule applied to \(P_1\).  For the second step, the reference comparison is now represented by \(\Gamma_1\), so applying the same rule to \(P_2\) gives \eqref{eq:Gamma2_stepwise_unified}.  Relative to the original reference one has \(P_{\rm tot}=P_1P_2\).  Expanding \eqref{eq:Gamma_total_composed_unified} gives
\begin{align*}
\Gamma_{\rm tot}
&=(P_1P_2)^{-1}\bar\Gamma(P_1P_2)+(P_1P_2)^{-1}d(P_1P_2)\\
&=P_2^{-1}\bigl(P_1^{-1}\bar\Gamma P_1+P_1^{-1}dP_1\bigr)P_2+P_2^{-1}dP_2\\
&=P_2^{-1}\Gamma_1P_2+P_2^{-1}dP_2.
\end{align*}
Metric compatibility at each stage follows from Theorem~\ref{thm:natural_metric_compatibility}, because the reference metric and reference comparison at that stage are metric-compatible.
\end{proof}

\begin{remark}[Composite references with embedded and internal structures]\label{rem:composite_reference_interpretation}
The same formula also applies to the most general composite reference situation considered here.  The selected reference may be obtained by applying an internal dilation--shear deformation to a space that has already been fully isometrically realized, rather than by deforming a flat Euclidean or Minkowskian baseline directly.  Once this composite space is selected as the reference, its present metric-compatible comparison is denoted by \(\bar\Gamma\), and any further deformation \(P\) is governed by the same natural-connection formula
\[
 \Gamma=\Lambda=P^{-1}\bar\Gamma P+P^{-1}dP.
\]
No separate Levi-Civita term or embedding correction is added to this formula; the Levi-Civita connection is relevant only when one chooses to compare the natural connection with the metric layer.
\end{remark}

\section{Torsion, Nonmetricity, and Curvature}\label{sec:torsion_curvature}

This section records the basic tensorial structures associated with the natural connection.  The essential point is that a natural connection obtained from a metric-compatible reference is itself metric-compatible with the induced metric.  Its departure from the Levi-Civita metric connection is therefore generally expressed by torsion and by its deformation origin, not by nonmetricity.

\begin{theorem}[Metric compatibility of the natural connection]\label{thm:natural_metric_compatibility}
Let \((M,\bar g,\bar\nabla)\) be a reference metric geometry satisfying
\[
 \bar\nabla\bar g=0.
\]
Let \(P\) be an admissible dilation--shear deformation field, and define
\[
 g(U,V)=\bar g(PU,PV),
 \qquad\text{equivalently}\qquad
 g=P^T\bar gP.
\]
Define the deformed-space covariant derivative by
\[
 \nabla_XV=P^{-1}\bar\nabla_X(PV).
\]
Then \(\nabla\) is metric-compatible with the induced metric \(g\):
\[
 \nabla g=0.
\]
Consequently, the nonmetricity of the natural connection with respect to \(g\) vanishes.
\end{theorem}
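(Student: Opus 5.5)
The plan is a direct, coordinate-free computation of \((\nabla_Xg)(U,V)\) using the defining formula \(\nabla_XV=P^{-1}\bar\nabla_X(PV)\) and the reference representation principle \(g(U,V)=\bar g(PU,PV)\).

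The first step is to check that \(\nabla\) is a genuine covariant derivative. This is exactly Proposition~\ref{prop:pullback_reference_comparison}: tensoriality in \(X\) follows from that of \(\bar\nabla\), and the Leibniz rule follows from \(P(fV)=fPV\), since \(P\) is a \((1,1)\)-tensor field. Consequently \(\nabla g\) is defined by the usual formula
\[
 (\nabla_Xg)(U,V)=X\bigl(g(U,V)\bigr)-g(\nabla_XU,V)-g(U,\nabla_XV).
\]

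Next I rewrite each term in reference language. The first term is \(X\bigl(g(U,V)\bigr)=X\bigl(\bar g(PU,PV)\bigr)\). For the second,
\[
 g(\nabla_XU,V)=\bar g\bigl(P P^{-1}\bar\nabla_X(PU),PV\bigr)=\bar g\bigl(\bar\nabla_X(PU),PV\bigr),
\]
and the third term is handled in the same way. Substituting these gives
\[
 (\nabla_Xg)(U,V)=X\bigl(\bar g(PU,PV)\bigr)-\bar g\bigl(\bar\nabla_X(PU),PV\bigr)-\bar g\bigl(PU,\bar\nabla_X(PV)\bigr)=(\bar\nabla_X\bar g)(PU,PV),
\]
which vanishes by the hypothesis \(\bar\nabla\bar g=0\). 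This shows \(\nabla g=0\), and hence the nonmetricity \(Q=-\nabla g\) is zero.

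As a cross-check, I would redo the argument in local matrix notation. I would verify that \(dg-\Gamma^Tg-g\Gamma=0\) with \(\Gamma=P^{-1}\bar\Gamma P+P^{-1}dP\). Expanding \(d(P^T\bar gP)\) and inserting \(d\bar g=\bar\Gamma^T\bar g+\bar g\bar\Gamma\) makes the \(dP\) terms cancel against the \(P^{-1}dP\) pieces.

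I do not expect a real obstacle. The only points needing care are that \(P\) acts pointwise, so no derivative of \(P\) escapes the identification \(PP^{-1}=I\), and that \(P\) is invertible, which holds because it is positive-definite (or is required to be in the indefinite admissible case). Notably, the self-adjointness of \(P\) is not needed for this argument.
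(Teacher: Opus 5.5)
Your proposal is correct and takes essentially the same approach as the paper: both rewrite $X[g(U,V)]=X[\bar g(PU,PV)]$ and use $\bar\nabla\bar g=0$ together with $PP^{-1}=I$ to obtain $g(\nabla_XU,V)+g(U,\nabla_XV)$. Your identity $(\nabla_Xg)(U,V)=(\bar\nabla_X\bar g)(PU,PV)$ is just a compact packaging of that computation, and your matrix cross-check and your remark that self-adjointness of $P$ is not used are both accurate.
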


\begin{proof}
For vector fields \(U,V\),
\[
 g(U,V)=\bar g(PU,PV).
\]
Using \(\bar\nabla\bar g=0\),
\begin{align*}
 X[g(U,V)]
 &=X[\bar g(PU,PV)]\\
 &=\bar g(\bar\nabla_X(PU),PV)
   +\bar g(PU,\bar\nabla_X(PV))\\
 &=\bar g(P\nabla_XU,PV)+\bar g(PU,P\nabla_XV)\\
 &=g(\nabla_XU,V)+g(U,\nabla_XV).
\end{align*}
This is precisely \(\nabla g=0\).  Hence the nonmetricity tensor
\[
 Q(X;U,V):=-(\nabla_Xg)(U,V)
\]
vanishes identically.
\end{proof}

\begin{remark}[Source of metric compatibility]
The reference spaces used in this paper ultimately come from Euclidean or Minkowskian metric spaces, or from Riemannian or pseudo-Riemannian metric geometries, together with metric-compatible comparisons.  A fully isometric realization also supplies a metric-compatible tangential comparison.  Therefore, when such a space is used as the reference for a dilation--shear deformation, the pullback construction preserves metric compatibility.  This explains why the deformation geometries considered here have zero nonmetricity with respect to their natural connection.  This statement does not identify the natural connection coefficients with the Levi-Civita connection coefficients of the induced metric: \(\mathring\Gamma[g]\) is the unique torsion-free metric-compatible connection determined by \(g\), whereas \(\Gamma=\Lambda\) is determined by the selected reference comparison and the deformation field \(P\), and may have torsion.
\end{remark}

\subsection{Torsion}\label{subsec:torsion}

The covariant derivative \(\nabla\) has local connection coefficients \(\Gamma^\rho{}_{\mu\nu}\).  In a coordinate frame its torsion is
\begin{equation}\label{eq:general_torsion_selected_connection}
 T^\rho{}_{\mu\nu}=\Gamma^\rho{}_{\mu\nu}-\Gamma^\rho{}_{\nu\mu}.
\end{equation}
In a fixed flat Cartesian baseline, \(\Gamma=\Lambda=P^{-1}dP\), hence
\begin{equation}\label{eq:torsion_Lambda_general}
 T^\rho{}_{\mu\nu}=2\Lambda^\rho{}_{[\mu\nu]}
 =(P^{-1})^\rho{}_A\left(\partial_\mu P^A{}_{\nu}-\partial_\nu P^A{}_{\mu}\right).
\end{equation}
Thus torsion measures the antisymmetric part of the natural dilation--shear comparison coefficients.  It may be nonzero even though the nonmetricity is zero.

\subsection{Nonmetricity}\label{subsec:nonmetricity}

For the natural connection, Theorem~\ref{thm:natural_metric_compatibility} gives
\begin{equation}\label{eq:natural_metric_compatible}
 \nabla g=0,
 \qquad
 Q_{\lambda\mu\nu}:=-(\nabla_\lambda g)_{\mu\nu}=0
\end{equation}
whenever \(\bar\nabla\bar g=0\).  Hence the dilation--shear deformation spaces considered here are not nonmetric geometries.  Their natural connection coefficients \(\Gamma\) may differ from \(\mathring\Gamma[g]\) because it may have torsion and because it is determined by the deformation field and the reference comparison.

\subsection{Curvature of the natural connection}\label{subsec:curvature_natural_connection}

The curvature two-form of the natural connection is
\begin{equation}\label{eq:curvature_general_coefficients}
 R[\Gamma]=d\Gamma+\Gamma\wedge\Gamma.
\end{equation}
In components,
\begin{equation}\label{eq:curvature_components_gamma}
 R^\rho{}_{\sigma\mu\nu}[\Gamma]
 =\partial_\mu\Gamma^\rho{}_{\nu\sigma}
 -\partial_\nu\Gamma^\rho{}_{\mu\sigma}
 +\Gamma^\rho{}_{\mu\lambda}\Gamma^\lambda{}_{\nu\sigma}
 -\Gamma^\rho{}_{\nu\lambda}\Gamma^\lambda{}_{\mu\sigma}.
\end{equation}
Because \(\Gamma=P^{-1}\bar\Gamma P+P^{-1}dP\), its curvature is the transported reference curvature:
\begin{equation}\label{eq:natural_R_reference}
 R[\Gamma]=P^{-1}\bar R P.
\end{equation}
Equivalently,
\begin{equation}\label{eq:natural_R_reference_components}
 R^\rho{}_{\sigma\mu\nu}[\Gamma]
 =(P^{-1})^\rho{}_A\,\bar R^A{}_{B\mu\nu}\,P^B{}_{\sigma}.
\end{equation}

\begin{proof}
Substitute \(\Gamma=P^{-1}\bar\Gamma P+P^{-1}dP\) into \(d\Gamma+\Gamma\wedge\Gamma\) and use \(d(P^{-1})=-P^{-1}(dP)P^{-1}\).  The mixed terms cancel, leaving
\[
 R[\Gamma]=P^{-1}(d\bar\Gamma+\bar\Gamma\wedge\bar\Gamma)P=P^{-1}\bar R P.
\]
\end{proof}

For a fixed flat Cartesian baseline, \(\bar\Gamma=0\) and \(\bar R=0\).  Thus
\begin{equation}\label{eq:natural_R_flat_zero}
 R[\Gamma]=0.
\end{equation}
In expanded form this is the Maurer--Cartan identity for \(\Gamma=P^{-1}dP\):
\begin{equation}\label{eq:maurer_cartan_flat_connection}
 d(P^{-1}dP)+(P^{-1}dP)\wedge(P^{-1}dP)=0.
\end{equation}
In components,
\begin{align}\label{eq:maurer_cartan_components}
0={}&\partial_\mu\!\bigl((P^{-1})^\rho{}_A\partial_\nu P^A{}_{\sigma}\bigr)
-\partial_\nu\!\bigl((P^{-1})^\rho{}_A\partial_\mu P^A{}_{\sigma}\bigr)\nonumber\\
&+\bigl((P^{-1})^\rho{}_A\partial_\mu P^A{}_{\lambda}\bigr)
  \bigl((P^{-1})^\lambda{}_B\partial_\nu P^B{}_{\sigma}\bigr)
-\bigl((P^{-1})^\rho{}_A\partial_\nu P^A{}_{\lambda}\bigr)
  \bigl((P^{-1})^\lambda{}_B\partial_\mu P^B{}_{\sigma}\bigr).
\end{align}
This identity shows that vanishing curvature of the flat-baseline natural connection does not mean that the deformation is uniform.  The non-uniformity may still be encoded in \(\Gamma=\Lambda\) and in torsion.

\subsection{Relation with Levi-Civita curvature}\label{subsec:lc_curvature_relation}

The Levi-Civita curvature \(R[\mathring\Gamma]\) is the curvature of the torsion-free metric connection of the induced metric \(g\).  It need not equal \(R[\Gamma]\).  Their difference is governed by the relative distortion tensor
\[
 D=\Gamma-\mathring\Gamma[g].
\]
For two connection one-forms related by \(\Gamma=\mathring\Gamma+D\), the corresponding curvatures satisfy
\begin{equation}\label{eq:curvature_difference_distortion}
 R[\Gamma]=R[\mathring\Gamma]+\mathring\nabla D+D\wedge D,
\end{equation}
where \(\mathring\nabla D=dD+\mathring\Gamma\wedge D+D\wedge\mathring\Gamma\) in connection-form notation.  Therefore, in a flat-baseline deformation, one has
\begin{equation}\label{eq:lc_curvature_from_distortion}
 R[\mathring\Gamma]=-\bigl(\mathring\nabla D+D\wedge D\bigr)
 \qquad\text{when }R[\Gamma]=0.
\end{equation}
This relation separates two kinds of curvature information: \(R[\Gamma]\) records the curvature transported from the selected reference comparison, while \(R[\mathring\Gamma]\) records the Riemannian curvature of the induced metric layer.

\subsection{Two decompositions of the natural connection}\label{subsec:lambda_split}

The natural connection coefficients \(\Gamma=\Lambda\) may be decomposed in two different ways.  The first is an internal decomposition of the natural comparison itself.  Lower the first vector index of \(\Lambda\) by the induced metric \(g\):
\begin{equation}
 \Lambda_{\mu\alpha\beta}=g_{\alpha\rho}\Lambda^\rho{}_{\mu\beta}.
\end{equation}
For each direction \(\mu\), split
\begin{equation}\label{eq:Lambda_split}
 \Lambda_{\mu\alpha\beta}
 =\widetilde\Lambda_{\mu\alpha\beta}+\Omega_{\mu\alpha\beta},
\end{equation}
where
\begin{equation}\label{eq:symmetric_Lambda}
 \widetilde\Lambda_{\mu\alpha\beta}:=\frac12(\Lambda_{\mu\alpha\beta}+\Lambda_{\mu\beta\alpha})
\end{equation}
and
\begin{equation}\label{eq:skew_Omega}
 \Omega_{\mu\alpha\beta}:=\frac12(\Lambda_{\mu\alpha\beta}-\Lambda_{\mu\beta\alpha}).
\end{equation}
In this internal split, \(\Omega\) is the natural rotational comparison compensation contained in the transport of vectors through the selected reference comparison and the field \(P\).  The term \(\widetilde\Lambda\) is the complementary deformation compensation associated with changes of lengths and mutual angles inside the same natural connection.  This split is not a nonmetricity split, because the covariant derivative associated with the covariant derivative associated with the natural connection is metric-compatible whenever the reference comparison is metric-compatible.  The rotational part \(\Omega\) is also not the Levi-Civita connection.  It is the natural rotation arising in the deformation comparison itself, while the Levi-Civita connection is the torsion-free metric-preserving rotational compensation selected by the induced metric layer.

The second decomposition compares the natural connection with the Levi-Civita connection of the induced metric layer.  In that relative description one writes
\begin{equation}\label{eq:LC_distortion_decomposition}
 \Gamma=\mathring\Gamma[g]+D,
 \qquad
 D:=\Gamma-\mathring\Gamma[g].
\end{equation}
Here \(\mathring\Gamma[g]\) is the Levi-Civita metric-preserving rotational compensation, and \(D\) is the residual deformation compensation measured relative to that Levi-Civita comparison.  Thus the internal split \(\Lambda=\widetilde\Lambda+\Omega\) and the relative split \(\Gamma=\mathring\Gamma+D\) answer different questions.  They are both legitimate descriptions, but they should not be combined into a formula of the form \(\mathring\Gamma+\Lambda\) for the natural total connection, because the natural total connection is already \(\Gamma=\Lambda\).

\section{Illustrative Families of Deformations}\label{sec:examples}

The following examples illustrate how the dilation--shear compensation and the Levi-Civita comparison of the induced metric differ in simple model families.  They exhibit the structures introduced above rather than repeating the ordinary Riemannian limiting case.  They also serve as consistency checks on the theory.  In each family one can test whether the formulas have the expected limiting behavior: constant deformations give no natural compensation over a flat Cartesian reference, one-dimensional dilation gives the same coefficient as the metric comparison, conformal dilation remains isotropic at the natural-connection level, non-diagonal shear can generate an internal rotational part, and the same metric gives different compensation data when different reference comparisons are selected.

\subsection{One-dimensional nonuniform dilation}

Let the reference line have metric \(d\bar s^2=dx^2\) and let
\begin{equation}
 P=a(x),\qquad a(x)>0 .
\end{equation}
Then
\begin{equation}
 g=a(x)^2dx^2 .
\end{equation}
The intrinsic dilation compensation is
\begin{equation}\label{eq:one_dim_Lambda}
 \Lambda_x=P^{-1}\partial_xP=\frac{a'}{a} .
\end{equation}
The same metric also has a Levi-Civita coefficient.  In this one-dimensional example the deformation compensation and the Levi-Civita coefficient coincide algebraically.  The Levi-Civita coefficient of the metric \(g=a^2dx^2\) is
\begin{equation}\label{eq:one_dim_LC}
 \mathring\Gamma^x{}_{xx}
 =\frac12g^{-1}\partial_xg
 =\frac{a'}{a} .
\end{equation}
Thus, numerically,
\begin{equation}\label{eq:one_dim_equality}
 \mathring\Gamma^x{}_{xx}=\Lambda^x{}_{xx} .
\end{equation}
This equality should not be read as an instruction to add the two terms.  It means that, in the one-dimensional dilation setting, the natural dilation--shear compensation and the Levi-Civita coefficient are two representations of the same dilation comparison: one is obtained directly from the deformation field \(P=a(x)\), and the other is obtained from the induced metric \(g=a^2dx^2\).  In a fully isometric realization, for instance after passing to the arclength coordinate
\begin{equation}
 s=\int a(x)\,dx,
\end{equation}
the same information is expressed by the Levi-Civita comparison of the Riemannian metric line.  This one-dimensional agreement should not be promoted to a general identification between \(\Gamma=\Lambda\) and \(\mathring\Gamma[g]\) in higher-dimensional deformation geometry.

The self-parallel equation in the intrinsic description is
\begin{equation}
 \ddot x+\frac{a'}{a}\dot x^2=0 .
\end{equation}
Equivalently, in the arclength coordinate, this is simply
\begin{equation}
 \frac{d^2s}{d\lambda^2}=0 .
\end{equation}
This example is important because it shows why \(\Lambda\) and \(\mathring\Gamma\) must not be added blindly.  In the one-dimensional dilation case they coincide as two representations of the same comparison information, whereas in higher dimensions they generally represent different comparison principles.  It is therefore a basic sanity check: the natural comparison reduces to the ordinary arclength comparison when there is only one possible tangent direction.

\subsection{A planar dilation--shear family}

Let the fixed reference geometry be the Euclidean plane with Cartesian coordinates $(x,y)$,
\begin{equation}
 \bar g=dx^2+dy^2,
 \qquad \bar{\Gamma}^k{}_{ij}=0 .
\end{equation}
Consider a diagonal deformation field
\begin{equation}\label{eq:diag_P_expanded}
 P(x,y)=\begin{pmatrix} a(x,y)&0\\0&b(x,y)\end{pmatrix},
 \qquad a>0,\quad b>0 .
\end{equation}
Then
\begin{equation}
 g=a^2dx^2+b^2dy^2 .
\end{equation}
The dilation--shear compensation is
\begin{equation}\label{eq:diag_Lambda_expanded}
 \Lambda_x=\begin{pmatrix} \partial_x\log a&0\\0&\partial_x\log b\end{pmatrix},
 \qquad
 \Lambda_y=\begin{pmatrix} \partial_y\log a&0\\0&\partial_y\log b\end{pmatrix} .
\end{equation}
If $a$ and $b$ are constant, this compensation vanishes.  If $a$ or $b$ varies, the flat plane has a nonzero dilation--shear deformation compensation even though the reference Levi-Civita connection in Cartesian coordinates is zero.

The Levi-Civita connection of the Riemannian metric $g=a^2dx^2+b^2dy^2$ is, in general, different from \eqref{eq:diag_Lambda_expanded}.  Its nonzero coefficients are
\begin{align}
 \oconn^x{}_{xx}&=\frac{a_x}{a},
 &\oconn^x{}_{xy}&=\frac{a_y}{a},
 &\oconn^x{}_{yy}&=-\frac{b b_x}{a^2},\\
 \oconn^y{}_{yy}&=\frac{b_y}{b},
 &\oconn^y{}_{xy}&=\frac{b_x}{b},
 &\oconn^y{}_{xx}&=-\frac{a a_y}{b^2}.
\end{align}
The first two diagonal-type coefficients resemble the deformation compensation, while the cross terms such as $\oconn^x{}_{yy}$ and $\oconn^y{}_{xx}$ arise from metric compatibility of the Riemannian metric geometry.  They are part of the Levi-Civita metric-preserving rotational comparison of the Riemannian metric geometry, not the direct dilation--shear deformation compensation.

For a vector $V=(V^x,V^y)^T$, the deformation comparison equations in the flat-reference planar case are
\begin{align}
 dV^x+(d\log a)V^x&=0,\\
 dV^y+(d\log b)V^y&=0.
\end{align}
Equivalently, $aV^x$ and $bV^y$ are constant along a deformation-parallel displacement.  This is exactly the fixed-reference statement that the reference representative $PV=(aV^x,bV^y)^T$ is unchanged.

This example separates two facts.  The metric $g$ records the completed intrinsic length structure.  The tensor $P^{-1}dP$ records the local variation of the dilation axes themselves.  The Levi-Civita connection of $g$ is metric-compatible and therefore reorganizes the same metric variation into a length-preserving comparison law.  This is a useful check on the formalism: when $a$ and $b$ are constant, the natural connection vanishes over the flat Cartesian reference, while the induced metric is only globally rescaled in fixed directions; when the transverse derivatives are present, the natural torsion and the Levi-Civita cross terms appear in different places, exactly as the theory predicts.

\subsection{Two-dimensional diagonal dilation--shear in detail}\label{subsec:diagonal_dilation_shear_detail_v2}

The planar family \eqref{eq:diag_P_expanded} is worth recording in a slightly more explicit form.  Put
\begin{equation}\label{eq:diag_exp_AB_v2}
 P=\begin{pmatrix}e^A&0\\0&e^B\end{pmatrix},
 \qquad
 g=\begin{pmatrix}e^{2A}&0\\0&e^{2B}\end{pmatrix},
\end{equation}
where \(A=A(x^1,x^2)\) and \(B=B(x^1,x^2)\).  Since \(P\) and \(\partial_\mu P\) commute in this diagonal example,
\begin{equation}\label{eq:diag_Lambda_AB_v2}
 \Lambda_1=\begin{pmatrix}\partial_1A&0\\0&\partial_1B\end{pmatrix},
 \qquad
 \Lambda_2=\begin{pmatrix}\partial_2A&0\\0&\partial_2B\end{pmatrix}.
\end{equation}
The deformation-induced rotational part vanishes:
\begin{equation}\label{eq:diag_Omega_zero_v2}
 \Omega_1=0,
 \qquad
 \Omega_2=0.
\end{equation}
Thus a diagonal dilation--shear field with fixed axes has no internal rotational compensation.  Its compensation is purely in the symmetric dilation--shear sector.

The torsion of the natural connection may nevertheless be nonzero, because torsion depends on the antisymmetry in the two lower connection indices, not on the $g$-skew part of each matrix $\Lambda_\mu$.  From \eqref{eq:diag_Lambda_AB_v2},
\begin{equation}\label{eq:diag_torsion_AB_v2}
 T^1{}_{12}=-\partial_2A,
 \qquad
 T^2{}_{12}=\partial_1B .
\end{equation}
Thus anisotropic dilation along fixed axes can induce torsion whenever the dilation in one direction varies transversely to that direction.  For the covariant derivative \(\nabla\) whose connection coefficients are the natural connection coefficients, however, the nonmetricity relative to the induced metric is zero by Theorem~\ref{thm:natural_metric_compatibility}.

The Levi-Civita connection \(\oconn[g]\) of the Riemannian metric is nevertheless not identical to \(\Lambda\).  Some representative coefficients are
\begin{align}\label{eq:diag_LC_coeffs_v2}
 \oconn^1{}_{11}&=\partial_1A,&
 \oconn^2{}_{12}&=\partial_1B,\\
 \oconn^1{}_{12}&=\partial_2A,&
 \oconn^2{}_{22}&=\partial_2B,\\
 \oconn^1{}_{22}&=-e^{2B-2A}\partial_1B,&
 \oconn^2{}_{11}&=-e^{2A-2B}\partial_2A.
\end{align}
The extra cross terms are the price of metric compatibility of the Riemannian metric.  They belong to the Levi-Civita comparison law, not to the direct dilation--shear compensation in that example.  This example gives a transparent calculation of the difference between deformation compensation and metric-compatible comparison even before non-diagonal shear is introduced.

It is also useful to compute explicitly the relative distortion
\begin{equation}\label{eq:diag_distortion_definition_v2}
 D:=\Gamma-\mathring\Gamma[g].
\end{equation}
Using \eqref{eq:diag_Lambda_AB_v2} and \eqref{eq:diag_LC_coeffs_v2}, the distortion matrices in the two coordinate directions are
\begin{equation}\label{eq:diag_distortion_matrices_v2}
 D_1=
 \begin{pmatrix}
 0&-\partial_2A\\[2pt]
 e^{2A-2B}\partial_2A&0
 \end{pmatrix},
 \qquad
 D_2=
 \begin{pmatrix}
 0&e^{2B-2A}\partial_1B\\[2pt]
 -\partial_1B&0
 \end{pmatrix}.
\end{equation}
Equivalently, the nonzero components are
\begin{equation}\label{eq:diag_distortion_components_v2}
 D^1{}_{12}=-\partial_2A,\qquad
 D^2{}_{11}=e^{2A-2B}\partial_2A,\qquad
 D^1{}_{22}=e^{2B-2A}\partial_1B,\qquad
 D^2{}_{21}=-\partial_1B .
\end{equation}
Thus the distortion vanishes precisely for separable one-directional dilation in this diagonal example, namely when \(\partial_2A=0\) and \(\partial_1B=0\).  These are also the transverse derivatives that produce the torsion components in \eqref{eq:diag_torsion_AB_v2}.  This makes the role of \(D\) transparent: it measures the part by which the natural comparison \(\Gamma=\Lambda\) differs from the torsion-free Levi-Civita comparison of the same induced metric.

The special case \(A=B=\varphi\) is another diagnostic limit.  Then \(P=e^\varphi I\) and the family reduces to the conformal deformation of Subsection~\ref{subsec:conformal_deformation_check}.  In this case \(\Gamma=d\varphi\,I\) is a pure scalar dilation connection, while the relative distortion becomes
\begin{equation}\label{eq:diag_distortion_conformal_limit_v2}
 D_1=
 \begin{pmatrix}
 0&-\partial_2\varphi\\[2pt]
 \partial_2\varphi&0
 \end{pmatrix},
 \qquad
 D_2=
 \begin{pmatrix}
 0&\partial_1\varphi\\[2pt]
 -\partial_1\varphi&0
 \end{pmatrix}.
\end{equation}
Thus, in the two-dimensional conformal limit, the natural connection is pure dilation, whereas its difference from the Levi-Civita comparison is purely rotational in the displayed matrix form.  This verifies that the Levi-Civita connection is not simply the dilation compensation itself; it reorganizes the same metric variation into the torsion-free metric comparison.

\subsection{Conformal deformation}\label{subsec:conformal_deformation_check}

A pure conformal deformation is obtained by
\begin{equation}
 P=e^{\varphi(x)}I .
\end{equation}
Then
\begin{equation}
 g=e^{2\varphi}\bar g,
\end{equation}
and
\begin{equation}\label{eq:conformal_DS}
 \Lambda_\mu=(\partial_\mu\varphi)I
\end{equation}
in a flat reference Cartesian frame, or $\Lambda_\mu=(\bar\nabla_\mu\varphi)I$ when the reference covariant derivative \(\bar\nabla\) is used.  In components over a flat reference chart,
\begin{equation}\label{eq:conformal_Lambda_components}
 \Lambda^k{}_{ij}=\delta^k_j\partial_i\varphi .
\end{equation}
If $\varphi$ is constant, $\Lambda=0$ and the deformation is a global homothety.  If $\varphi$ varies, the comparison detects a non-uniform local dilation.

This example is a useful test case for separating natural torsion from Levi-Civita metric curvature.  Since
\begin{equation}
 T^k{}_{ij}=\Lambda^k{}_{ij}-\Lambda^k{}_{ji},
\end{equation}
one obtains
\begin{equation}\label{eq:conformal_torsion}
 T^k{}_{ij}=\delta^k_j\partial_i\varphi-\delta^k_i\partial_j\varphi .
\end{equation}
Thus even an isotropic local dilation may produce a nonzero torsion of the natural connection when the dilation factor varies in different directions.  For the covariant derivative associated with this natural connection, \(Q(\nabla)=0\).  If the reference baseline is flat, then \(R[\Gamma]=0\).  These statements concern the natural connection coefficients \(\Gamma\), not the Levi-Civita connection \(\mathring\Gamma[g]\) of the conformally changed metric.

For a two-dimensional surface, the Gaussian curvature of a conformally related metric satisfies
\begin{equation}
 K[e^{2\varphi}\bar g]=e^{-2\varphi}\left(\bar K-\bar\Delta\varphi\right),
\end{equation}
where $\bar\Delta$ is the Laplace--Beltrami operator of $\bar g$ \cite[Thm.~7.30, p.~217]{Lee}.  Thus a nonconstant dilation field changes the intrinsic curvature of the Riemannian metric geometry.

This example tests the separation between the natural connection and the metric layer in its cleanest form.  Over a flat reference, \(R[\Gamma]=0\) by the Maurer--Cartan identity, while the Gaussian curvature of the induced metric may be nonzero when \(\bar\Delta\varphi\neq0\).  Hence zero curvature of the natural connection does not erase the effective Riemannian curvature of the induced metric; it relocates that curvature information into the distortion between \(\Gamma\) and \(\mathring\Gamma[g]\).

\subsection{From a fixed flat baseline patch to a spherical metric}\label{subsec:flat_to_sphere_v2}

This example separates three pieces of information: the metric generated by a deformation field, the dilation--shear compensation with respect to the chosen reference, and the ordinary Riemannian data of the resulting metric.

Take a flat reference metric written in local coordinates \((\theta,\phi)\) as
\begin{equation}
 \bar g=d\theta^2+d\phi^2 .
\end{equation}
Let
\begin{equation}\label{eq:P_flat_to_sphere_v2}
 P(\theta)=
 \begin{pmatrix}
 R&0\\[2pt]
 0&R\sin\theta
 \end{pmatrix} .
\end{equation}
Then
\begin{equation}\label{eq:sphere_metric_from_flat_v2}
 g=P^T\bar gP=R^2d\theta^2+R^2\sin^2\theta\,d\phi^2,
\end{equation}
which is the local metric of the round sphere of radius \(R\).  If this metric is merely represented from the flat patch by the field \(P\), the associated flat-patch representation compensation is
\begin{equation}\label{eq:Lambda_flat_to_sphere_v2}
 \Lambda_\theta=P^{-1}\partial_\theta P=
 \begin{pmatrix}
 0&0\\[2pt]
 0&\cot\theta
 \end{pmatrix},
 \qquad
 \Lambda_\phi=0 .
\end{equation}
Thus the same metric that is later recognized as a round sphere has a nonzero natural connection in this flat-patch representation.  This should not be confused with using the already realized round sphere as a reference, where its own Levi-Civita comparison is the reference comparison rather than an additional flat-to-sphere dilation--shear contribution.

The Levi-Civita connection of the metric \eqref{eq:sphere_metric_from_flat_v2} is the usual intrinsic Riemannian connection of the round sphere:
\begin{equation}\label{eq:sphere_LC_coeffs_v2}
 \mathring\Gamma^{\theta}{}_{\phi\phi}=-\sin\theta\cos\theta,
 \qquad
 \mathring\Gamma^{\phi}{}_{\theta\phi}=\mathring\Gamma^{\phi}{}_{\phi\theta}=\cot\theta,
\end{equation}
with the remaining independent coefficients equal to zero.  The Gaussian curvature is
\begin{equation}\label{eq:sphere_K_v2}
 K=\frac1{R^2} .
\end{equation}
Consequently a flat patch cannot be represented as a round spherical metric by a purely developable change of coordinates.  The metric representation requires a nontrivial \(P\), and \(\Lambda\) records that flat-patch representation.

The same intrinsic metric may now be locally realized as the ordinary round sphere in Euclidean three-space by
\begin{equation}\label{eq:round_sphere_embedding_chain_v2}
 X(\theta,\phi)=
 \bigl(R\sin\theta\cos\phi,\,R\sin\theta\sin\phi,\,R\cos\theta\bigr).
\end{equation}
Its induced metric is exactly \eqref{eq:sphere_metric_from_flat_v2}.  The unit normal is \(n=X/R\), and the second fundamental form is
\begin{equation}\label{eq:round_sphere_second_fundamental_v2}
 h_{\theta\theta}=R,\qquad
 h_{\phi\phi}=R\sin^2\theta,\qquad
 h_{\theta\phi}=0,
\end{equation}
up to the sign determined by the choice of normal.  Equivalently, \(h=(1/R)g\).  The Gauss equation gives
\begin{equation}
 K=\frac{h_{\theta\theta}h_{\phi\phi}-h_{\theta\phi}^2}
        {g_{\theta\theta}g_{\phi\phi}-g_{\theta\phi}^2}
  =\frac1{R^2},
\end{equation}
in agreement with \eqref{eq:sphere_K_v2}.  Thus two interpretations must be distinguished.  As a flat-patch representation, the spherical metric is written by a nontrivial field \(P\).  As a fully isometrically realized round sphere, the ordinary Riemannian description keeps \(g\), \(\mathring\Gamma[g]\), \(K\), and \(h\), and does not assign an independent dilation--shear compensation to the sphere with respect to itself.  This is an important reference-dependence check: the same metric may be produced from a flat patch by a nontrivial \(P\), but after full isometric realization the sphere is allowed to serve as its own Riemannian reference.  The formalism therefore avoids double-counting the flat-to-sphere representation when the realized sphere is used as the next reference.

\subsection{A genuinely non-diagonal shear family}

Let the fixed reference geometry again be the Euclidean plane and take a symmetric positive-definite deformation field
\begin{equation}\label{eq:P_nondiag_v2}
 P(x)=\begin{pmatrix} a(x)&s\\ s&1\end{pmatrix},
 \qquad 0<|s|<1,
 \qquad a(x)>s^2,
\end{equation}
where $s$ is a constant nonzero shear parameter.  The deformed metric is
\begin{equation}
 g=P^TP=P^2
 =\begin{pmatrix} a^2+s^2&s(a+1)\\ s(a+1)&1+s^2\end{pmatrix} .
\end{equation}
Since the reference connection is zero,
\begin{equation}
 \Lambda_1=P^{-1}\partial_1P,
 \qquad
 \Lambda_2=0 .
\end{equation}
Using
\begin{equation}
 P^{-1}=\frac{1}{a-s^2}\begin{pmatrix}1&-s\\-s&a\end{pmatrix},
\end{equation}
we obtain
\begin{equation}\label{eq:Lambda_nondiag_v2}
 \Lambda_1=\frac{a'}{a-s^2}\begin{pmatrix}1&0\\-s&0\end{pmatrix},
 \qquad
 \Lambda_2=0 .
\end{equation}
Although $P$ is symmetric, $P^{-1}\partial P$ need not be symmetric.  Thus a pure dilation--shear deformation field can induce a rotational component in the comparison law when the deformation varies spatially.

To display the decomposition invariantly, lower the first index with the deformed metric $g=P^2$:
\begin{equation}\label{eq:lowered_Lambda_nondiag_v2}
 \Lambda_{\rho 1\nu}:=g_{\rho\sigma}\Lambda^\sigma{}_{1\nu}
 =\begin{pmatrix}aa'&0\\sa'&0\end{pmatrix}.
\end{equation}
The $g$-symmetric and $g$-skew parts are therefore
\begin{equation}\label{eq:tLambda_nondiag_v2}
 \tLambda_{\rho1\nu}
 =\begin{pmatrix}aa'&\frac12sa'\\\frac12sa'&0\end{pmatrix},
\end{equation}
\begin{equation}\label{eq:Omega_nondiag_v2}
 \Omega_{\rho1\nu}
 =\begin{pmatrix}0&-\frac12sa'\\\frac12sa'&0\end{pmatrix}.
\end{equation}
Hence
\begin{equation}
 \Omega_1\neq0
 \qquad\text{whenever}\qquad sa'\neq0 .
\end{equation}
This is the essential point of the example.  The field $P$ in \eqref{eq:P_nondiag_v2} is symmetric and contains only dilation--shear as a metric-generating deformation.  Nevertheless, when the deformation varies spatially, the comparison of neighboring deformed vectors contains a rotational part.  The rotational sector $\Omega$ is therefore not an independent rotation inserted into $P$; it is induced by the spatial variation of a non-diagonal dilation--shear field.

For the one-variable choice $a=a(x)$ used above, the torsion components vanish because $\Lambda_2=0$ and the only nonzero column of $\Lambda_1$ is the first one.  This shows that the internal rotational part $\Omega$ and the torsion $T$ measure different aspects of the deformation compensation.  If the same family is allowed to depend on both coordinates, $a=a(x,y)$, then
\begin{equation}
 \Lambda_\mu=\frac{\partial_\mu a}{a-s^2}
 \begin{pmatrix}1&0\\-s&0\end{pmatrix},
\end{equation}
and the torsion has the representative components
\begin{equation}\label{eq:shear_torsion_twovar_v2}
 T^1{}_{12}=-\frac{\partial_2 a}{a-s^2},
 \qquad
 T^2{}_{12}=\frac{s\,
\partial_2 a}{a-s^2}.
\end{equation}
Thus non-diagonal shear can display both a $g$-skew internal rotational part and a torsion of the natural connection, but these are distinct derived structures.

For instance, if $a=1$, $a'=1$, and $s=1/2$, then
\begin{equation}
 \Lambda_1=\begin{pmatrix}4/3&0\\-2/3&0\end{pmatrix} .
\end{equation}
The nonzero off-diagonal term and the nonzero $g$-skew part show explicitly that non-diagonal shear can produce a rotational component inside the dilation--shear deformation compensation.  Omitting $\Omega$ would leave out part of the deformation compensation and would not give the full natural dilation--shear comparison.  This gives a nontrivial internal check on the decomposition of \(\Lambda\): a field \(P\) that is pointwise symmetric and contains no independent rotation can still generate a natural rotational comparison when its principal dilation--shear directions vary.  The rotational part \(\Omega\) and the torsion \(T\) are therefore not the same diagnostic; the example displays cases where one is present without the other.

\subsection{A non-uniform sphere over a round-sphere reference}\label{subsec:nonuniform_sphere_reference_v2}

A round sphere may itself be chosen as an intermediate metric reference for a further deformation.  Let
\begin{equation}\label{eq:round_sphere_reference_v2}
 \bar g_S=R^2d\theta^2+R^2\sin^2\theta\,d\phi^2,
 \qquad
 \bar K=\frac1{R^2},
\end{equation}
and let \(\bar\nabla\) be its Levi-Civita covariant derivative in this example, with connection coefficients \(\bar\Gamma_S\).  Since this reference is used as a fully isometrically realized Riemannian geometry, its comparison is its Levi-Civita comparison; it is not assigned an additional independent dilation--shear sector.

Now impose a non-uniform conformal dilation on the sphere:
\begin{equation}\label{eq:nonuniform_sphere_metric_v2_revised}
 g=e^{2\psi(\theta,\phi)}\bar g_S .
\end{equation}
This is represented by
\begin{equation}\label{eq:nonuniform_sphere_P_Lambda_v2_revised}
 P_S=e^{\psi}I .
\end{equation}
Because \(P_S\) is scalar, the natural total connection over the realized sphere is
\begin{equation}\label{eq:nonuniform_sphere_total_connection_v2_revised}
 \Gamma_S=\Lambda_S=P_S^{-1}\bar\Gamma_S P_S+P_S^{-1}dP_S
 =\bar\Gamma_S+d\psi\,I .
\end{equation}
The term \(\bar\Gamma_S\) is the transported tangential comparison of the realized round-sphere reference, while \(d\psi\,I\) is the additional dilation contribution.  Thus the new dilation contribution is nonzero whenever \(\psi\) is nonconstant.  This example describes a round sphere used as an intermediate metric reference and then made internally non-uniform by a new dilation field.  It is not the same as the ordinary round sphere itself.  The simplification \(P_S^{-1}\bar\Gamma_SP_S=\bar\Gamma_S\) is itself a useful check: it occurs because \(P_S=e^\psi I\) commutes with the reference connection.  For a general anisotropic dilation or shear over the sphere, the transported term \(P^{-1}\bar\Gamma_SP\) would be genuinely modified.

For the covariant derivative whose connection coefficients are given by the natural connection over the round-sphere reference, \(Q(\nabla)=0\).  The Gaussian curvature of the conformally non-uniform metric is
\begin{equation}\label{eq:nonuniform_sphere_K_v2_revised}
 K[g]=e^{-2\psi}\left(\frac1{R^2}-\Delta_S\psi\right),
\end{equation}
where \(\Delta_S\) is the Laplace--Beltrami operator of the round sphere.  Therefore the same scalar field that supplies the additional dilation contribution also changes the intrinsic Riemannian curvature of the non-uniform metric.

The corresponding natural comparison law is
\[
 \nabla_XV=P_S^{-1}\bar\nabla_X(P_SV).
\]
Here the intermediate round sphere supplies the reference covariant derivative \(\bar\nabla\) and its connection coefficients \(\bar\Gamma_S\).  The Levi-Civita connection \(\mathring\Gamma[g]\) of the non-uniform metric \(g\) is the corresponding metric-layer connection, not the natural comparison.

\subsection{Spherical compensations with respect to different references}\label{subsec:sphere_compensations_different_references_v2}

The previous two spherical examples should not be conflated.  A spherical metric can be written by a nontrivial field with respect to a flat patch, as in \eqref{eq:P_flat_to_sphere_v2}.  But once the same metric is selected as a fully isometrically realized round sphere, it is a metric-only Riemannian reference for the next comparison, and its dilation--shear compensation with respect to itself is zero.

If a further deformation is imposed on the round sphere, for example
\begin{equation}
 g=e^{2\chi(\theta,\varphi)}g_S,
\end{equation}
then there are two different questions.  With respect to the round sphere as a fully isometrically realized reference, the deformation field is \(P=e^\chi I\), and the natural total connection is
\begin{equation}\label{eq:sphere_reference_lambda_v2}
 \Gamma_{\mathrm{rel}}=\Lambda_{\mathrm{rel}}
 =P^{-1}\bar\Gamma_SP+P^{-1}dP
 =\bar\Gamma_S+d\chi\,I .
\end{equation}
Equivalently, in components,
\begin{equation}\label{eq:sphere_reference_lambda_components_v2}
 \Gamma^\rho{}_{\mu\nu}
 =\bar\Gamma^\rho{}_{S\mu\nu}+\delta^\rho_\nu\,\partial_\mu\chi .
\end{equation}
The term \(d\chi\,I\) is the new dilation contribution, while \(\bar\Gamma_S\) is the transported comparison of the realized round-sphere reference.  Formula \eqref{eq:sphere_reference_lambda_v2} is therefore correct as the natural connection formula.  It should not be confused with the Levi-Civita connection of the conformally changed metric \(e^{2\chi}g_S\), which contains the additional torsion-free symmetrizing terms required by the Levi-Civita condition.  This is a direct consistency test of the reference principle: the realized sphere contributes its own comparison once, and the further scalar deformation contributes only \(d\chi I\).  One should not reconstruct the round sphere from a flat patch and add a separate flat-to-sphere term.  Such a reconstruction would treat the already realized spherical bending as if it were still an independent dilation--shear deformation, which is not the interpretation adopted in this example.

\subsection{Summary of the illustrative families}

The examples illustrate and test three distinctions without repeating the same point.  First, in one dimension \(\Lambda\) and \(\mathring\Gamma\) may coincide algebraically as two representations of the same dilation comparison, but this is not a general identification.  Second, conformal, diagonal, and non-diagonal shear examples show that the deformation field may generate natural torsion and an internal rotational sector inside \(\Lambda\), while the covariant derivative associated with the natural connection remains metric-compatible with the induced metric.  The conformal limit \(A=B\) gives an especially useful check: the natural connection is pure scalar dilation, while the relative distortion from Levi-Civita is rotational in the displayed matrix form.  Third, the spherical examples show that a metric representation from a flat patch and a fully realized Riemannian reference are not the same comparison problem.  A round sphere may be represented from a flat patch by a nontrivial field, but once it is selected as a fully isometrically realized reference, its own Levi-Civita connection supplies the reference comparison rather than an additional flat-to-sphere dilation--shear term.  A further non-uniform deformation imposed on that sphere is then measured by the natural connection with respect to the realized round sphere.  These checks support the central rule: the natural connection is computed from the selected reference comparison and the actual deformation field being applied, not by adding independent representations of the same geometric information.

\section{Relation to Existing Geometric Frameworks}\label{sec:relations}

This section only locates the present construction relative to nearby frameworks.  It is not intended as a survey or a full comparative study.

\subsection{Riemannian geometry}

Riemannian geometry is an intrinsic theory of a metric geometry \((M,g)\) \cite{Lee,doCarmo}.  Its canonical comparison law is the Levi-Civita connection \(\mathring\Gamma[g]\), determined by
\begin{equation}
 \mathring\nabla g=0,\qquad \mathring T=0 .
\end{equation}
The present theory keeps this Levi-Civita comparison as the metric-layer connection, but also retains deformation data \(g=P^T\bar gP\) and the natural covariant derivative \(\nabla\), whose connection coefficients \(\Gamma\) are computed from the appropriate deformation field.  Thus the ordinary Riemannian case is recovered when no independent deformation comparison is retained, while the deformation-geometric case separates metric comparison from natural comparison.

\subsection{Embedding geometry}

An isometric embedding represents an intrinsic metric geometry by an ambient shape; see, for example, Nash's embedding theorem and standard moving-frame treatments \cite{Nash1956,SpivakVol2,KobayashiNomizu1}.  In an adapted moving frame, the ambient variation splits into a tangential Levi-Civita part and a normal second fundamental form.  Thus an embedding realizes the intrinsically defined Levi-Civita connection tangentially and records extrinsic bending through the normal part.  The dilation--shear compensation \(\Lambda\), however, is not defined by the embedding itself; it records the dilation--shear state computed from the relevant deformation field.  For an intrinsic final space this means the total deformation field from the fixed flat baseline, while a further deformation over a fully realized reference gives the compensation with respect to that realized reference.  In special examples a deformation field that produces a metric may be reflected, after isometric realization, in \(g\), \(\mathring\Gamma[g]\), curvature, and the second fundamental form, but \(\Lambda\) is not identical with extrinsic curvature.  A more general reference may already combine such an isometric realization with an internal dilation--shear natural connection.  In that case the reference is represented by its current metric-compatible connection \(\bar\Gamma\), and any further deformation is again computed from \(P^{-1}\bar\Gamma P+P^{-1}dP\).

\subsection{Metric-affine and symmetric teleparallel geometry}

Metric-affine geometry allows the metric and affine connection to be specified independently \cite{HehlEtAl}, and symmetric teleparallel geometry emphasizes nonmetricity in a flat, torsion-free affine representation \cite{NesterYo,BeltranJimenezEtAl}.  The present framework is different from both.  Here the covariant derivative and its connection coefficients are fixed together by the deformation field and the selected reference comparison:
\begin{equation}
 \nabla_XV=P^{-1}\bar\nabla_X(PV),
 \qquad
 \Gamma=\Lambda=P^{-1}\bar\Gamma P+P^{-1}dP.
\end{equation}
For a flat-baseline dilation--shear deformation the connection coefficients have torsion determined by \(\Lambda\), metric compatibility with the induced metric, and zero curvature when the reference connection is flat:
\begin{equation}
 T^\rho{}_{\mu\nu}(\nabla)=2\Lambda^\rho{}_{[\mu\nu]},
 \qquad
 Q(\nabla)=0,
 \qquad
 R[\Gamma]=0 .
\end{equation}
Thus the present construction is not a general nonmetric metric-affine geometry.  It is a metric-compatible deformation geometry with a connection determined by \(P\) and the reference comparison, rather than an independently postulated affine connection.

\subsection{Conformal geometry}

The conformal sector, related to the classical Weyl rescaling viewpoint \cite{Weyl1918}, is recovered when
\begin{equation}
 P=e^\varphi I .
\end{equation}
Then
\begin{equation}
 g=e^{2\varphi}\bar g,
 \qquad
 \Lambda=d\varphi\, I
\end{equation}
for a flat Cartesian reference frame.  Thus conformal geometry corresponds to the scalar dilation sector of the present framework, while the general theory also includes anisotropic dilation--shear.

\section{Discussion and Outlook}

This paper has developed the geometry of dilation- and shear-deformed spaces from a deformation-field viewpoint.  The metric is represented by a deformation field \(P\) through
\begin{equation}
 g=P^T\bar gP .
\end{equation}
The deformation field is not merely a square root of the metric.  It also supplies the reference representative \(\bar V=PV\) and therefore determines a natural comparison law.

The central point is the status of the total connection.  In the natural layer, the total comparison is encoded by the covariant derivative
\begin{equation}
 \nabla_XV=P^{-1}\bar\nabla_X(PV),
\end{equation}
with local connection coefficients
\begin{equation}
 \Gamma=P^{-1}\bar\Gamma P+P^{-1}dP.
\end{equation}
The total dilation--shear compensation is denoted by \(\Lambda\), and in this natural-comparison convention \(\Lambda=\Gamma\).  For an intrinsic final space relative to a fixed flat Cartesian baseline this becomes
\begin{equation}
 \Gamma_{\rm tot}=\Lambda_{\rm tot}=P_{\rm tot}^{-1}dP_{\rm tot}.
\end{equation}
This is the precise sense in which the total dilation--shear connection coefficients are the total deformation compensation.

A key result is that metric compatibility is preserved by the pullback construction.  If \(\bar\nabla\bar g=0\), then the induced metric \(g=P^T\bar gP\) satisfies
\[
 \nabla g=0.
\]
Thus the natural spaces considered here have zero nonmetricity.  Their general difference from Levi-Civita geometry is torsion and deformation dependence, not nonmetricity.

The Levi-Civita covariant derivative \(\mathring\nabla[g]\) and its connection coefficients \(\mathring\Gamma[g]\) remain essential, but their role is different.  The coefficients \(\mathring\Gamma[g]\) give the canonical metric-preserving, torsion-free connection of the induced metric layer.  They are not the natural comparison structure of the deformation field.  The distinction becomes especially transparent in a fully isometric realization, where the Levi-Civita connection appears as the tangential metric-preserving comparison of the realized Riemannian geometry, while any further dilation--shear compensation is computed from the further deformation field relative to the realized reference connection coefficients.

When one wants to compare the natural connection with the metric layer, one may use the relative distortion decomposition at the level of connection coefficients:
\begin{equation}
 \Gamma=\mathring\Gamma[g]+D,
 \qquad D=\Gamma-\mathring\Gamma[g].
\end{equation}
The expression \(\mathring\Gamma+\Lambda\) is therefore not used as the definition of the total natural connection.  The natural total connection is represented by \(\Gamma=\Lambda\); \(\mathring\Gamma\) belongs to the metric layer.

Embedded realizations provide a useful interpretation but not a substitute for the deformation-comparison data.  An isometric embedding preserves the metric \(g\) and realizes the intrinsic Levi-Civita connection tangentially, while its second fundamental form records normal bending.  If an already realized Riemannian geometry is used as a reference for a further deformation, then
\[
 P^{-1}\bar\Gamma P+P^{-1}dP
\]
is the natural total connection with respect to the realized reference; it is not a reconstruction of the reference's own bending.  If the same reference also carries an internal dilation--shear natural connection from an earlier deformation, that internal comparison is already included in \(\bar\Gamma\), and the same formula gives the final natural connection.  Equivalently, all earlier embedded and internal stages may be absorbed into a single total deformation field and a single total pullback formula.

The examples show how the theory behaves in simple cases.  In one dimension, the Levi-Civita and dilation expressions may coincide algebraically as two representations of the same dilation comparison.  In two and higher dimensions, conformal, anisotropic, and shear examples show that \(\mathring\Gamma\) and \(\Lambda\) encode different comparison principles.  The non-diagonal shear example also shows that \(\Lambda\) may contain a rotational part induced by the variation of the deformation field itself.

Future work should classify dilation--shear deformation fields by their natural connection coefficients, by their distortion relative to the Levi-Civita metric layer, and by their possible embedded realizations.  The same distinction should also be useful in applications where an effective metric is induced by a more fundamental deformation field.


\begin{thebibliography}{99}

\bibitem{BeltranJimenezEtAl}
J. Beltr\'an Jim\'enez, L. Heisenberg, and T. S. Koivisto,
The geometrical trinity of gravity,
\textit{Universe} 5 (2019), no. 7, 173.
\newblock \href{https://doi.org/10.3390/universe5070173}{doi:10.3390/universe5070173}.

\bibitem{doCarmo}
M. P. do Carmo,
\textit{Riemannian Geometry},
Mathematics: Theory \& Applications, Birkh\"auser, Boston, 1992.

\bibitem{HehlEtAl}
F. W. Hehl, J. D. McCrea, E. W. Mielke, and Y. Ne'eman,
Metric-affine gauge theory of gravity: Field equations, Noether identities, world spinors, and breaking of dilation invariance,
\textit{Phys. Rep.} 258 (1995), no. 1--2, 1--171.
\newblock \href{https://doi.org/10.1016/0370-1573(94)00111-F}{doi:10.1016/0370-1573(94)00111-F}.

\bibitem{KobayashiNomizu1}
S. Kobayashi and K. Nomizu,
\textit{Foundations of Differential Geometry. Vol. I},
Interscience Publishers, New York, 1963.

\bibitem{Lee}
J. M. Lee,
\textit{Introduction to Riemannian Manifolds}, 2nd ed.,
Graduate Texts in Mathematics 176, Springer, Cham, 2018.
\newblock \href{https://doi.org/10.1007/978-3-319-91755-9}{doi:10.1007/978-3-319-91755-9}.

\bibitem{Nash1956}
J. Nash,
The imbedding problem for Riemannian manifolds,
\textit{Ann. of Math. (2)} 63 (1956), no. 1, 20--63.
\newblock \href{https://doi.org/10.2307/1969989}{doi:10.2307/1969989}.

\bibitem{NesterYo}
J. M. Nester and H.-J. Yo,
Symmetric teleparallel general relativity,
\textit{Chinese Journal of Physics} 37 (1999), 113.
\newblock arXiv:\href{https://arxiv.org/abs/gr-qc/9809049}{gr-qc/9809049}.

\bibitem{SpivakVol2}
M. Spivak,
\textit{A Comprehensive Introduction to Differential Geometry. Vol. II},
2nd ed., Publish or Perish, Wilmington, 1979.

\bibitem{Weyl1918}
H. Weyl,
Gravitation und Elektrizit\"at,
\textit{Sitzungsberichte der K\"oniglich Preu{\ss}ischen Akademie der Wissenschaften zu Berlin} (1918), 465--480.

\end{thebibliography}
\end{document}